\newtheorem{thm}{Theorem }[section]
\newtheorem{pro}{Proposition}[section]
\newtheorem{example}{Example}[section]
\journal{J. Computational and Applied Mathematics}
\begin{document}

\begin{frontmatter}

%% Title, authors and addresses

%% use the tnoteref command within \title for footnotes;
%% use the tnotetext command for the associated footnote;
%% use the fnref command within \author or \address for footnotes;
%% use the fntext command for the associated footnote;
%% use the corref command within \author for corresponding author footnotes;
%% use the cortext command for the associated footnote;
%% use the ead command for the email address,
%% and the form \ead[url] for the home page:
%%
%% \title{Title\tnoteref{label1}}
%% \tnotetext[label1]{}
%% \author{Name\corref{cor1}\fnref{label2}}
%% \ead{email address}
%% \ead[url]{home page}
%% \fntext[label2]{}
%% \cortext[cor1]{}
%% \address{Address\fnref{label3}}
%% \fntext[label3]{}

\title{Numerical method for real root isolation of \emph{semi-algebraic} system and its applications
\tnoteref{t1}} \tnotetext[t1]{This research was partially supported
by the National Natural Science Foundation of China(11171053), and the National Natural Science Foundation of China Youth Fund Project(11001040,61103110)}

%% use optional labels to link authors explicitly to addresses:
%% \author[label1,label2]{<author name>}
%% \address[label1]{<address>}
%% \address[label2]{<address>}

\author{Zhenyi Ji\corref{cor1}$^{1,2}$}\ead{zyji001@163.com}\author{Wenyuan Wu$^{2}$}\author{Yi Li$^{2}$}\author{Yong Feng$^{2}$}

\address{$^{1}$Lab. of Computer Reasoning and
Trustworthy Comput, School of Computer Science and Engineering, University of Electronic Science and Technology
of China, Chengdu 611731, P.R.China}

\address{$^{2}$Lab. of Automated Reasoning and Cognition, Chongqing Institute of Green and Intelligent Technology,
 Chinese Academy of Science, Chongqing, 401120, P.R. China}\cortext[cor1]{Corresponding author}

\address{}

\begin{abstract}
%% Text of abstract
In this paper, based on the homotopy continuation method and the interval Newton method, %an efficient algorithm for isolating the real roots of
 an efficient algorithm is introduced to isolate the real roots of \emph{semi-algebraic} system. %\emph{semi-algebraic} system is introduced.
 Tests on some random examples and a variety of problems including transcendental functions arising in many applications show
 that the new algorithm reduces the cost substantially  compared with the traditional symbolic approaches.
\end{abstract}

\begin{keyword}
%% keywords here, in the form: keyword \sep keyword

%% MSC codes here, in the form: \MSC code \sep code
%% or \MSC[2008] code \sep code (2000 is the default)
\emph{Semi-algebraic} system; Homotopy method;  Interval Newton's method; Real root isolation
\end{keyword}

\end{frontmatter}

%%
%% Start line numbering here if you want
%%
% \linenumbers

%% main text
\section{Introduction}

The problem of counting and isolating real solutions of nonlinear system is an important topic in computing geometry and many other applications in various fields, e.g.,
  the real intersection points for piecewise algebraic curve \cite{WZ2008,ZW2009,W2012,LW2009,WW2008}, the stability of a large class of biological
  networks \cite{AFJS2004,WX2005}, discovering non-linear ranking functions of loop programs in \cite{CXYZZ2007,XYZ2008}, automated proving inequality type theorem \cite{YHX2001}
   and so on.

Many algorithms for real root isolation of one polynomial in one variable have been developed in \cite{CL1982,ABS1994,AS2005,ZX2007,MRR}. For multivariable case,
Xia et al.\cite{XL2002} proposed an algorithm based on Wu's method for isolating the real roots of \emph{semi-algebraic} system with integer coefficients,
and made it more available with interval algorithm in their later work \cite{XZ2006}. There are also other algorithms based on different techniques,
 see \cite{BFLM2009,CGG2012,CGY2007,R1999} for more details.

Actually, most of the algorithms mentioned above can compute the exact results because they depend on symbolic computations, but they are restricted to small size systems because
of the high complexity of the symbolic computation. In order to avoid this problem, Shen et al.\cite{S2012} presented a numerical algorithm improving the efficience based on homotopy
continuation method combined with interval Newton iteration technique.

In this paper, we extend the numerical method to a class of \emph{semi-algebraic} systems and transcendental functions.  We always assume that the system is square and it
only has isolated roots in $\mathbb{C}$. In order to avoid the singularity of the Jacobian matrix, we also suppose that the multiplicity of these points are one.

The rest of the paper is organized as follows.  Definitions and
preliminaries about homotopy continuation method and interval arithmetic are given in section 2. Section 3 first gives a brief review of  the numerical algorithm for isolating real roots
of a class of polynomial systems, and then presents the numerical method for isolating the real roots of \emph{semi-algebraic} systems. The  experimental results  together with
comparison to symbolic methos is given in this section. Applying the new algorithm to some problems arising from piecewise algebraic curve, chemical engineering, robot kinematic problem, circuit
design are shown in section 4.
Finally, section 5 draws a conclusion of this paper.

\section{Interval Arithmetic And Homotopy Continuation Method }

In this section, some basic theories and tools about interval arithmetic and homotopy continuation method are presented.

\subsection{Interval Newton's method}

Interval operations were first introduced by Moore \cite{M1966}. The key idea of interval arithmetic is substituting an interval for
a floating number in numerical computation, which is used to tackle the instability and error analysis. For a
more detailed and complete discussion the reader is referred to \cite{W1987}.

A real \textbf{interval} $X$ is a nonempty set of real numbers
$$X=[\underline{x},\bar{x}]=\{x \in \mathbb{R}: \underline{x}<x<\bar{x}\},$$
the set of all intervals in $\mathbb{R}$ is denoted by $I(\mathbb{R})$.

An \emph{interval vector} \textbf{X} is  a vector whose elements are intervals, and an \emph{interval matrix} can be similarly defined. In the rest of this paper,
we let \textbf{\texttt{X}} denotes the set of \emph{interval vector}.

For an interval $X$, the \textbf{midpoint} of  $X$ is $m(X)=(\underline{x}+\bar{x})/2$, the\textbf{ width} of $X$ is $w(x)=\bar{x}-\underline{x}$,
and the \textbf{radius} of $X$ is $r(X)=(\bar{x}-\underline{x})/2$.

%\emph{Interval arithmetic} has been defined to extend to $I(\mathbb{R})$ elementary functions over $\mathbb{R}$.\footnote{check}
Given interval $X=[\underline{x},\bar{x}]$, $Y=[\underline{y},\bar{y}]$, the four element operations are defined  as \\
\vspace{+0.1cm}
$X+Y=[\underline{x}+\underline{y},\bar{x}+\bar{y}]$\\
\vspace{+0.1cm}
$X-Y=[\underline{x}-\bar{y},\bar{x}-\underline{y}]$\\
\vspace{+0.1cm}
$X\cdot Y=[min\{\underline{x}\underline{y},\underline{x}\bar{y},\bar{x}\underline{y}, \bar{x}\bar{y}\},max\{\underline{x}\underline{y},\underline{x}\bar{y},\bar{x}\underline{y}, \bar{x}\bar{y}\}]$\\
\vspace{+0.1cm}
$X/Y=[\underline{x},\bar{x}]\cdot[1/\underline{y},1/\bar{y}]$, $0\notin[\underline{y},\bar{y}]$.

The intersection of two intervals $X$ and $Y$ is empty if either $\bar{x}<\underline{y}$ or $\bar{y}<\underline{x}$. In this case
, we write $X \cap Y=\emptyset .$

For interval matrices and interval vectors, the
concepts such as midpoint, width, radius, etc, and the arithmetic operations are
defined in components.

Let $\mathbb{R}[\textbf{\emph{x}}]$ be the ring of polynomials in the variables $(x_1,x_2,\cdots,x_n)$ with coefficients in $\mathbb{R}$, and \emph{\textbf{f}}$=[f_1,\cdots,f_n]$ be a polynomial system, where $f_i\in \mathbb{R}[\textbf{\emph{x}}]$.

Suppose $f: \mathbb{R}^{n}\rightarrow \mathbb{R}$  is a function from a real vector to a real number, $F: I(\mathbb{R}^{n})\rightarrow I(\mathbb{R})$  is called an
\emph{ interval extension} of $f$ if
$$F([x_1,x_1],\cdots,[x_n,x_n])=f(x_1,\cdots,x_n)$$ for all $x_i \in X_i, i=1,2,\ldots,n.$

We say that $F = F(X_1,\cdots, X_n)$ is \emph{inclusion monotonicity }if
$$Y_i \subseteq X_i,  i=1,\cdots, n \Rightarrow F(Y_1,\cdots, Y_n) \subseteq F(X_1,\cdots,X_n).$$
And it is easy
to prove that all the polynomial operations satisfy the inclusive monotonicity.

In the following, we let \emph{\textbf{f}}$'$ be the Jacobian matrix of \emph{\textbf{f}} , and $\emph{\textbf{F}}$, \emph{\textbf{F}}$'$ be the \emph{interval extension}
of \emph{\textbf{f}} and \emph{\textbf{f}}$'$ with inclusive monotonicity, respectively.

 Moore first define the following interval Newton's operators:
\begin{equation}
N(\textbf{X})=m(\textbf{X})-V(\textbf{X})\emph{\textbf{f}}\text{ }{(m(\textbf{X}))},
\end{equation}
where $V(\textbf{X})$ is an \emph{interval matrix} containing \emph{\textbf{F}}$'$(\textbf{X})$^{-1}$.

In order to avoid the computation of the inversion of interval matrix in formula (1), Krawcayk  proposed the following  operator:
\begin{equation}
K(\textbf{\emph{y}},\textbf{X})=\textbf{\emph{y}}-\textsf{Y}\textbf{\emph{f}}(\textbf{\emph{y}})+(\textbf{I}-\textsf{Y}\emph{\textbf{F}}\text{ } '(\textbf{X}))(\textbf{X}-\textbf{\emph{y}}),
\end{equation}
where $\textbf{\emph{y}}$ is chosen from the region $\textbf{X}$, \textbf{I} denotes the \emph{unit matrix}, %\emph{\textbf{F}}$'$(\textbf{X}) is the
%inclusion monotone extension of the Jacobian matrix \emph{\textbf{f}}$'$,
and \textsf{Y} is an arbitrary nonsingular matrix.

It has been proved that \emph{Krawcayk  operator } has the following properties.

\begin{pro}\label{pro21}
Suppose $K(\textbf{{y}},\textbf{X})$ is computed by formula (2), then

1): $\textbf{x}^{*}\in \textbf{X} \text{ and }$ \textbf{f}\text{ }$(\textbf{x}^{*})=0 \Rightarrow \textbf{x}^{*}\in K(\textbf{{y}},\textbf{X})$.

2): $K(\textbf{{y}},\textbf{X}) \subset int(\textbf{X})\Rightarrow$ \textbf{{f}} has only one root in $\textbf{X}$, where $int(\textbf{X})$ denotes the topological interior of the box of $\textbf{X}$.

3): $K(\textbf{{y}},\textbf{X}) \subseteq int(\textbf{X})\Rightarrow$ \textbf{{f}} has a root in $\textbf{X}$.

4): $K(\textbf{{y}},\textbf{X})\cap \textbf{X}=\varnothing \Rightarrow$ no solution in $\textbf{X}$.
\end{pro}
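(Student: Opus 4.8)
The plan is to handle the four assertions in the order 1), 4), 3), 2), since 4) is the contrapositive of 1) and the existence half of 2) is already contained in 3). Throughout, the two workhorses are the coordinatewise mean value theorem (legitimate because a box $\textbf{X}$ is convex, so segments between points of $\textbf{X}$ stay in $\textbf{X}$) and the fundamental property of interval extensions with inclusion monotonicity: evaluating the algebraic expression defining $K$ at any admissible real realizations of the interval quantities produces a value lying in $K(\textbf{{y}},\textbf{X})$.

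For 1), the idea is to exhibit $\textbf{x}^{*}$ explicitly as one realization of (2). Applying the mean value theorem to each component $f_i$ along the segment from $\textbf{{y}}$ to $\textbf{x}^{*}$ gives points $\xi_i\in\textbf{X}$ with $f_i(\textbf{x}^{*})-f_i(\textbf{{y}})=\nabla f_i(\xi_i)^{\top}(\textbf{x}^{*}-\textbf{{y}})$; stacking these rows yields a real matrix $A$ with $A\in \textbf{F}'(\textbf{X})$ (by inclusion monotonicity of the interval Jacobian) such that $\textbf{f}(\textbf{x}^{*})-\textbf{f}(\textbf{{y}})=A(\textbf{x}^{*}-\textbf{{y}})$. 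Since $\textbf{f}(\textbf{x}^{*})=0$, substituting $A$ for $\textbf{F}'(\textbf{X})$ and $\textbf{x}^{*}-\textbf{{y}}$ for $\textbf{X}-\textbf{{y}}$ in (2) and simplifying gives exactly $\textbf{{y}}-\textsf{Y}\textbf{f}(\textbf{{y}})+(\textbf{I}-\textsf{Y}A)(\textbf{x}^{*}-\textbf{{y}})=\textbf{x}^{*}$, so $\textbf{x}^{*}\in K(\textbf{{y}},\textbf{X})$. Assertion 4) then follows by contraposition: any root $\textbf{x}^{*}\in\textbf{X}$ would lie in $K(\textbf{{y}},\textbf{X})\cap\textbf{X}$, contradicting emptiness.

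For 3), the key is a Brouwer fixed point argument. Define the continuous map $g(\textbf{x})=\textbf{x}-\textsf{Y}\textbf{f}(\textbf{x})$ on $\textbf{X}$; repeating the mean value computation with a variable endpoint $\textbf{x}\in\textbf{X}$ shows $g(\textbf{x})=\textbf{{y}}-\textsf{Y}\textbf{f}(\textbf{{y}})+(\textbf{I}-\textsf{Y}A(\textbf{x}))(\textbf{x}-\textbf{{y}})$ for some $A(\textbf{x})\in\textbf{F}'(\textbf{X})$, hence $g(\textbf{x})\in K(\textbf{{y}},\textbf{X})$ for every $\textbf{x}\in\textbf{X}$. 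Thus $g$ maps the compact convex box $\textbf{X}$ into $K(\textbf{{y}},\textbf{X})\subseteq int(\textbf{X})\subseteq\textbf{X}$, so Brouwer's theorem gives a fixed point $\textbf{x}^{*}$, and nonsingularity of $\textsf{Y}$ turns $g(\textbf{x}^{*})=\textbf{x}^{*}$ into $\textbf{f}(\textbf{x}^{*})=0$.

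Finally 2): existence is inherited from 3) (since $\subset$ implies $\subseteq$), so only uniqueness remains, and this is the step I expect to be the main obstacle. The plan is to show the strict inclusion forces every real matrix in the interval matrix $M:=\textbf{I}-\textsf{Y}\textbf{F}'(\textbf{X})$ to be contractive, hence every $A\in\textbf{F}'(\textbf{X})$ nonsingular. Writing $v=\textbf{X}-\textbf{{y}}$ one has $K(\textbf{{y}},\textbf{X})=(\textbf{{y}}-\textsf{Y}\textbf{f}(\textbf{{y}}))+Mv$, so $K(\textbf{{y}},\textbf{X})\subset int(\textbf{X})$ yields $r(Mv)<r(\textbf{X})=r(v)$ componentwise; combining this with the interval estimate $r(Mv)\geq |M|\,r(v)$ (with $|M|$ the nonnegative magnitude matrix) and $r(v)>0$, a Perron--Frobenius argument gives $\rho(|M|)<1$, whence $\rho(\textbf{I}-\textsf{Y}A)\leq\rho(|M|)<1$ and $\textsf{Y}A$, hence $A$, is nonsingular for all $A\in\textbf{F}'(\textbf{X})$. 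Then two roots $\textbf{x}^{*},\textbf{x}^{**}\in\textbf{X}$ would give, via the coordinatewise mean value theorem, some $B\in\textbf{F}'(\textbf{X})$ with $0=\textbf{f}(\textbf{x}^{*})-\textbf{f}(\textbf{x}^{**})=B(\textbf{x}^{*}-\textbf{x}^{**})$, forcing $\textbf{x}^{*}=\textbf{x}^{**}$. The delicate points I would need to pin down carefully are the radius inequality for products of interval matrices, the step from a componentwise strict inequality to the spectral bound $\rho(|M|)<1$, and the exclusion (or separate treatment) of degenerate, i.e.\ thin, components of $\textbf{X}$.
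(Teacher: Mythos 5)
The paper does not actually prove Proposition~2.1: after introducing the Krawczyk operator it says ``It has been proved that \emph{Krawcayk operator} has the following properties'' and lists the four assertions without argument, implicitly deferring to the classical interval Newton literature (Moore, Krawczyk, Neumaier). So there is no in-paper argument to compare against; the only question is whether your reconstruction is correct, and it is, following the standard route. Parts~1), 4), 3) are clean: the componentwise mean value theorem together with inclusion monotonicity places the root $\textbf{x}^{*}$, and more generally any $g(\textbf{x})=\textbf{x}-\textsf{Y}\textbf{f}(\textbf{x})$ with $\textbf{x}\in\textbf{X}$, inside $K(\textbf{y},\textbf{X})$ (note the stacked matrix $A$ built from rows $\nabla f_i(\xi_i)^{\top}$ with row-dependent $\xi_i$ need not be a genuine Jacobian of $\textbf{f}$ at a single point, but it does lie entrywise in $\textbf{F}'(\textbf{X})$, which is all you need); 4) is the contrapositive of 1); Brouwer on the compact convex box plus nonsingularity of $\textsf{Y}$ gives 3). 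For the uniqueness half of 2), the three ``delicate points'' you flag all resolve favorably: $r(Mv)\geq |M|\,r(v)$ holds because for $\alpha^{*}\in M_{ij}$ of maximal modulus the thin product $\alpha^{*}v_j\subseteq M_{ij}v_j$ already has radius $|M_{ij}|\,r(v_j)$, and radii add under interval sums; the strict componentwise inequality $|M|u<u$ with $u=r(\textbf{X})>0$ gives $\rho(|M|)<1$ via the scaled row-sum bound $\lVert\mathrm{diag}(u)^{-1}|M|\,\mathrm{diag}(u)\rVert_{\infty}<1$; and the degenerate-component worry is vacuous, since $K(\textbf{y},\textbf{X})\subset int(\textbf{X})$ forces $int(\textbf{X})\neq\varnothing$ and hence $r(\textbf{X})>0$ in every coordinate. (Incidentally, since $K(\textbf{y},\textbf{X})$ is a closed box and $int(\textbf{X})$ is open, the hypotheses of 2) and 3) coincide as stated, so your remark that the existence claim in 2) is inherited from 3) is consistent with the paper's wording.)
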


In particular, if \textbf{\emph{y}} and \textsf{Y} are chosen to be $\textbf{y}=m(\textbf{X})$ and $\textsf{Y}=[m($\emph{\textbf{F}}$'(\textbf{X}))]^{-1}$ respectively, then the Moore form of the \emph{Krawcayk  operators } is :
\begin{equation}
K(\textbf{X})= m(\textbf{X})-[m(\emph{\textbf{F}}\text{ } '(\textbf{X}))]^{-1}\textbf{\emph{f}}(m(\textbf{X}))+\Delta,
\end{equation}
where $\Delta=(\textbf{I}-[m(\emph{\textbf{F}}\text{ } '(\textbf{X}))]^{-1}\emph{\textbf{F}}\text{ } '(\textbf{X}))(\textbf{X}-m(\textbf{X})).$

\subsection{Homotopy continuation method}

Homotopy continuation method is an efficient numerical method for finding all isolated solutions of polynomial system. The method traces a path from the solution of an
easy problem to the solution of the given problem by use of a homotopy continuous
transformation %from the easy problem to the given one
. See reference \cite{L1999,S1986,VVC1994,L2003,SW2005} for more details.
There also exists some software packages \cite{Llll2008,V1999,W2005} for homotopy continuation methods. In our implementation, we use Hom4ps-2.0 which could return all the approximate complex zeros of a given polynomial system efficiently, along with
residues and condition numbers.

\section{Algorithm for isolating real zeros of \emph{semi-algebraic} system}

In this section, we will present an algorithm for isolating the real roots of a zero-dimensional \emph{semi-algebraic} system. Our idea is to compute the isolated real root intervals
of the zero-dimensional polynomial system first. In this step, we will use the hybrid algorithm introduced by Shen et, al \cite{S2012}. Then decide whether these intervals
satisfy inequations by substitution.

First, we introduce the definition of \emph{semi-algebraic} system. A \emph{semi-algebra} system denoted by $SAS$ can be written in the following form:
\begin{eqnarray}
\left\{ {\begin{array}{*{20}{c}}
 f_1=0,\cdots,f_n=0, \hfill \\
 p_1>0,\cdots,p_q>0 ,\hfill  \\
 n_1\geq 0,\cdots,n_s\geq0, \hfill\\
 h_1\neq0,\cdots,h_t \neq 0, \hfill\\
  \end{array}} \right.
\end{eqnarray}
where $n\geq1$ and $q,s,t\geq0$. We call it zero-dimensional \emph{semi-algebraic} system if $\{f_1,\cdots,f_n\}$ has only finite zeros in $\mathbb{C}$. Following the notations in
software package Discover \cite{XL2002},
we let \textbf{\emph{f}},\emph{\textbf{ n}}, \textbf{\emph{p}}, \textbf{\emph{h}} denote the polynomial equations, non-negative polynomial inequalities,  positive polynomial inequalities and polynomial inequations respectively.

\subsection{Real root isolation for zero-dimensional polynomial system}
In this subsection, we introduce a hybrid method for the real root isolation of a zero-dimensional polynomial system, see \cite{S2012} for more details.

Suppose we have obtained all the isolated approximate roots of \emph{\textbf{f}} through homotopy continuation method.
For each approximate root, the following proposition gives a method  to construct initial interval which contains its corresponding accurate root.
\begin{pro}\label{pro31}\cite{S2012}
Let \textbf{f}$ \text{}=[f_1,\cdots,f_n]$ be a polynomial system in ${\mathbb{R}[\textbf{{x}}}]$, and $\bar{\textbf{{x}}} \in \mathbb{R}^n$ be an
approximate zero of \textbf{f}. If the following conditions hold:

1: $\textbf{f}\text{ }'(\bar{\textbf{{x}}})$ exists, and there are real numbers $B$ and $\eta$ such that

$$||\textbf{f}\text{ }'(\bar{\textbf{{x}}})^{-1}||\leq B, ||\textbf{f}\text{ }'(\bar{\textbf{{x}}})^{-1}\textbf{f}(\bar{\textbf{{x}}}) ||\leq \eta,$$

2: There exists a ball neighbourhood $ O(\bar{\textbf{{x}}},\omega)$ such that $\textbf{f}\text{ }'(\textbf{{x}})$ satisfies the Lipschitz condition on it:

$$||\textbf{f}\text{ }'({\textbf{{x}}})-\textbf{f}\text{ }'({\textbf{{y}}})||\leq {K}||{\textbf{{x}}}-{\textbf{{y}}}||, \forall {\textbf{{x}}},{\textbf{{y}}} \in O(\bar{\textbf{{x}}},\omega),$$

3: $$h=BK\eta\leq \frac{{1}}{2}, \omega \geq \frac{{1-2\sqrt{1-2h}}}{h}\eta,$$
then \textbf{f} has only one root in $\overline{O(\bar{\textbf{{x}}},\omega)}.$
\end{pro}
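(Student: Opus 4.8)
The statement is essentially a restatement of the Newton--Kantorovich theorem, so the plan is to reduce it to the classical Kantorovich setting and then extract the quantitative radius.

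First I would set up the iteration. Starting from $\bar{\textbf{\emph{x}}}$, define the Newton sequence $\textbf{\emph{x}}^{(0)}=\bar{\textbf{\emph{x}}}$ and $\textbf{\emph{x}}^{(k+1)}=\textbf{\emph{x}}^{(k)}-\textbf{f}\text{ }'(\textbf{\emph{x}}^{(k)})^{-1}\textbf{f}(\textbf{\emph{x}}^{(k)})$, provided each Jacobian along the way is invertible. The bound $\|\textbf{f}\text{ }'(\bar{\textbf{\emph{x}}})^{-1}\|\le B$ gives control at the base point; combined with the Lipschitz condition from hypothesis~2 and the Banach lemma on perturbation of the identity, one shows that for $\textbf{\emph{x}}$ in a suitable sub-ball the Jacobian $\textbf{f}\text{ }'(\textbf{\emph{x}})$ stays invertible with a uniformly bounded inverse, so the iteration is well defined there.

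Next I would prove convergence by the standard majorant argument. Introduce the scalar majorant polynomial $g(t)=\tfrac{1}{2}Kt^{2}-\tfrac{1}{B}t+\tfrac{\eta}{B}$ (or the normalized version $\tfrac12 h t^2 - t + 1$ after rescaling), whose smaller root is $t^{*}=\frac{1-\sqrt{1-2h}}{h}\,\eta$; the condition $h=BK\eta\le\tfrac12$ in hypothesis~3 is exactly what makes the discriminant nonnegative so that $t^{*}$ is real. Using the integral form of the remainder $\textbf{f}(\textbf{\emph{x}}^{(k+1)})=\int_{0}^{1}\bigl(\textbf{f}\text{ }'(\textbf{\emph{x}}^{(k)}+\tau(\textbf{\emph{x}}^{(k+1)}-\textbf{\emph{x}}^{(k)}))-\textbf{f}\text{ }'(\textbf{\emph{x}}^{(k)})\bigr)(\textbf{\emph{x}}^{(k+1)}-\textbf{\emph{x}}^{(k)})\,d\tau$ together with the Lipschitz bound, I would show by induction that $\|\textbf{\emph{x}}^{(k+1)}-\textbf{\emph{x}}^{(k)}\|\le t_{k+1}-t_{k}$, where $t_{k}$ is the Newton sequence for the scalar majorant $g$. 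Since $t_{k}\uparrow t^{*}$, the sequence $\textbf{\emph{x}}^{(k)}$ is Cauchy and converges to some $\textbf{\emph{x}}^{*}$ with $\|\textbf{\emph{x}}^{*}-\bar{\textbf{\emph{x}}}\|\le t^{*}\le\frac{1-\sqrt{1-2h}}{h}\eta$. Continuity of $\textbf{f}$ and of $\textbf{f}\text{ }'^{-1}$ then forces $\textbf{f}(\textbf{\emph{x}}^{*})=0$, and because $\omega\ge\frac{1-2\sqrt{1-2h}}{h}\eta$ the limit lies in $\overline{O(\bar{\textbf{\emph{x}}},\omega)}$ (one checks $\frac{1-\sqrt{1-2h}}{h}\le\frac{1-2\sqrt{1-2h}}{h}$ is \emph{not} needed; rather the hypothesis on $\omega$ is tuned so that the whole iteration, and the uniqueness ball, fit inside $O(\bar{\textbf{\emph{x}}},\omega)$).

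For uniqueness I would argue that if $\textbf{\emph{y}}^{*}$ were a second zero in $\overline{O(\bar{\textbf{\emph{x}}},\omega)}$, then writing $0=\textbf{f}(\textbf{\emph{y}}^{*})-\textbf{f}(\textbf{\emph{x}}^{*})=\int_{0}^{1}\textbf{f}\text{ }'(\textbf{\emph{x}}^{*}+\tau(\textbf{\emph{y}}^{*}-\textbf{\emph{x}}^{*}))\,d\tau\,(\textbf{\emph{y}}^{*}-\textbf{\emph{x}}^{*})$ and using invertibility of the averaged Jacobian on the ball (again via the Banach lemma and the Lipschitz bound, which is where the second, larger root of $g$ enters as the radius of the uniqueness region) gives $\textbf{\emph{y}}^{*}=\textbf{\emph{x}}^{*}$. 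The main obstacle, and the part deserving the most care, is the bookkeeping in hypothesis~3: one must verify that the numerical quantities $B$, $K$, $\eta$, and the prescribed lower bound on $\omega$ are consistent, i.e.\ that the ball $O(\bar{\textbf{\emph{x}}},\omega)$ on which the Lipschitz condition is assumed actually contains both the entire Newton orbit and the uniqueness ball, so that every invocation of the Lipschitz estimate is legitimate; once that containment is pinned down, the rest is the routine Kantorovich induction.
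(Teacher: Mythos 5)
The paper itself gives no proof of Proposition~\ref{pro31}: it is imported verbatim from the cited thesis \cite{S2012}, so there is nothing in the text to compare against. Your overall strategy, identifying the statement as a Newton--Kantorovich theorem and running the scalar majorant argument, is surely the intended route. But there is a genuine gap in your handling of hypothesis~3, and you essentially flag it yourself in the parenthetical remark without resolving it.

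The majorant argument shows that the Newton sequence converges to a zero $\textbf{\emph{x}}^{*}$ with $\|\textbf{\emph{x}}^{*}-\bar{\textbf{\emph{x}}}\|\le t^{*}=\frac{1-\sqrt{1-2h}}{h}\eta$, and every invocation of the Lipschitz bound along the way requires the iterates and connecting segments to lie inside $O(\bar{\textbf{\emph{x}}},\omega)$. That needs $\omega\ge t^{*}$. The hypothesis, however, only gives $\omega\ge\frac{1-2\sqrt{1-2h}}{h}\eta$. For $h<3/8$ one has $\sqrt{1-2h}>1/2$, so $1-2\sqrt{1-2h}<0$ and the hypothesis is vacuous; more generally $\frac{1-2\sqrt{1-2h}}{h}\eta<t^{*}$ for every $0<h<\tfrac12$, with equality only at $h=\tfrac12$. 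So for any $h<\tfrac12$ there exist admissible $\omega$ strictly smaller than $t^{*}$, for which neither the orbit nor the limit is guaranteed to stay in $O(\bar{\textbf{\emph{x}}},\omega)$, and the existence half of the conclusion cannot be extracted from your chain of estimates. Waving this away with ``the hypothesis on $\omega$ is tuned so that the whole iteration fits inside'' is not a proof; the hypothesis, as written, is not so tuned. You should either assume $\omega\ge t^{*}$ outright, or state explicitly that the printed bound is a likely misprint and identify which inequality on $\omega$ makes the argument work.

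A secondary issue concerns uniqueness. Kantorovich gives uniqueness of $\textbf{\emph{x}}^{*}$ only within the ball of radius $t^{**}=\frac{1+\sqrt{1-2h}}{h}\eta$ (intersected with the Lipschitz domain). The hypotheses impose no upper bound on $\omega$, so if $\omega>t^{**}$ your averaged-Jacobian argument breaks down: the Banach-lemma invertibility of $\int_{0}^{1}\textbf{\emph{f}}\,'\bigl(\textbf{\emph{x}}^{*}+\tau(\textbf{\emph{y}}^{*}-\textbf{\emph{x}}^{*})\bigr)\,d\tau$ uses the Lipschitz bound together with a size restriction on the perturbation, and that restriction is exactly $\|\textbf{\emph{y}}^{*}-\bar{\textbf{\emph{x}}}\|\le t^{**}$. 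Both halves of ``only one root in $\overline{O(\bar{\textbf{\emph{x}}},\omega)}$'' therefore hinge on containment relations among $O(\bar{\textbf{\emph{x}}},\omega)$, the existence ball of radius $t^{*}$, and the uniqueness ball of radius $t^{**}$, and your proposal establishes neither containment from what is given. The high-level plan is right; the bookkeeping you yourself singled out as the delicate part is where the proof is still open.
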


 Let $\hat{\textbf{\emph{x}}}$ be an accurate root, and $\bar{\textbf{\emph{x}}}$ be its approximation. Denote the \emph{Hessian matrix} of $f_j$ by $H_j$,
and $h_j^{i}$ its column vector. Let
\begin{equation}
\lambda=max_{1\leq i\leq n} \sum\limits_{j=1}^{n}|h_j^{i}(\bar{\textbf{\emph{x}}})|_{max},\text{ and } r=\frac{{||J_F^{-1}(\bar{\textbf{\emph{x}}})||_{\infty} ||J_F(\bar{\textbf{\emph{x}}})||_{\infty}^{2}}}{1-n\lambda||J_F^{-1}(\bar{\textbf{\emph{x}}})||_{\infty} ||J_F(\bar{\textbf{\emph{x}}})||_{\infty}},
\end{equation}
where $|\cdot|_{max}$ denotes the maximum module component of a vector.

A natural fact about $r$ is that if  $r>Im(\bar{\textbf{\emph{x}}})$, then
$\bar{\textbf{\emph{x}}}$ is a imaginary number, where $Im(\bar{\textbf{\emph{x}}})$ means the imaginary part of $\bar{\textbf{\emph{x}}}$. So we can delete some complex roots without using the interval arithmetic. %based on formula (5).

 %The author has given an algorithm to compute $\omega$ and $\lambda$, the reader can see\cite{S2012} for more details.

We use algorithms from \cite{S2012} to compute $\omega$ and $\lambda$.

According to the above descriptions, the following algorithm $real_{-}roots$ can be used for isolating real root intervals of \emph{\textbf{f}}.

\noindent \textbf{Algorithm 1 }:  $real_{-}root_{-}isolate$ \cite{S2012}

Input: Polynomial system: \emph{\textbf{f}}, and a threshold $\tau$.

Output: Isolated intervals of \emph{\textbf{f}} $real_{-}roots$ or $\{\}$.

\begin{enumerate}
\item Let $real_{-}roots=\{\}.$
\item Computing all the isolated roots of \emph{\textbf{f}}, denote it by $roots$, and let $nreal$ be the number of $roots$.
\item If $nreal=0$, then  return $real_{-}roots$, and stop this program.
\item For $i=1:nreal$;
      \begin{itemize}
      \item let $z$ be the $i$th element of $roots$, and determine whether $z$ is a complex root using formula (5) and $Im(z)$.
      \item If $z$ is a complex root, stop; else, construct the initial interval of the real part of $z$ according to proposition 3.1, and denote it by  $\textbf{X}_0$.
      Compute $K(\textbf{X}_0)$ according to formula (3). If $K(\textbf{X}_0)\subset \textbf{X}_0$, then let  $real_{-}roots=real_{-}roots \cup \{K(\textbf{X}_0)\cap \textbf{X}_0\}$, and stop this step. If $K(\textbf{X}_0)\cap \textbf{X}_0=\varnothing $, stop. Otherwise, bisect $\textbf{X}_0=K(\textbf{X})\cap \textbf{X}_0$, and process each half separately.
      \end{itemize}
\item End For
\item If there exists some intervals in $real_{-}root$ such that they are not disjoint, then compute the intersection of these intervals,
split each interval $\textbf{X}_i=\textbf{X}_{i1} \cup \textbf{X}_{i2}$, where $\textbf{X}_{i1}$ denotes the intersection portion,
 process all  the subintervals,
 and remove the subinterval which don't contain a real root of \emph{\textbf{f}}.
\item If the width of an interval is bigger than $\tau$, then bisect the interval and process each half separately.
\item Return $real_{-}roots$.
\end{enumerate}

\subsection{Algorithm for {semi-algebraic} system}
 Now suppose we have obtained the isolated  real root intervals of polynomial system \textbf{\emph{f}} defined in (4).
% denoted it by $real_{-}roots$.
Let $\textbf{X}=([a_1,b_1],\cdots,[a_n,b_n])$ be an isolated real root interval of \textbf{\emph{f}},
%be an element  of $real_{-}roots$,
and  $\hat{\textbf{\emph{x}}}$ be
  the accurate zero of \textbf{\emph{f}} such that $\hat{\textbf{\emph{x}}} \in \textbf{X}$. Assume $f$ is a positive polynomial inequality
   in (4) and $f(\textbf{X})=[a,b]$, then there are only three cases that can happen about the sign of $[a,b]$, including the
following:

1: $a>0  \Rightarrow f(\hat{\textbf{\emph{x}}})>0$.

2: $ b< 0 \Rightarrow f(\hat{\textbf{\emph{x}}})<0$.

3: $0 \in [a,b]$.

Hence, if $[a,b]$ satisfies the first case or the second, we can easily decide whether $\textbf{X}$ satisfy $f$ or not.
For case 3, it is difficult to decide the sign of $f(\hat{\textbf{\emph{x}}})$.  In the following, we present a method to solve this problem.

Let  $g=y^2f+1$ be a new polynomial in $\mathbb{R}[\textbf{\emph{x}},y]$, and $[\emph{\textbf{f}},g]$ be a  polynomial system
$\{f_1,\cdots,f_n,g\}$ in $\mathbb{R}[\textbf{\emph{x}},y]$,  where $\mathbb{R}[\textbf{\emph{x}},y]$ denotes the ring of polynomials in the variables
$(x_1,x_2,\cdots,x_n,y)$ with coefficients in $\mathbb{R}$. Assume $\textbf{\texttt{Z}}=\{\textbf{Z}_1,\cdots,\textbf{Z}_m\}$ is the set of the isolated
real root intervals of $[\emph{\textbf{f}},g]$. Define the projection $\pi: I(\mathbb{R}^{n+1})\rightarrow I(\mathbb{R}^{n})$ which remove the last coordinate of an interval vector in $I(\mathbb{R}^{n+1})$.
\begin{thm}\label{thm32}
%Suppose $\textbf{X}$ is an isolated real root of \textbf{{f}}, and $\textbf{\texttt{Z}}=(\textbf{Z}_1,\cdots,\textbf{Z}_m)$ is the set of the isolated
%real root intervals of $[{\textbf{f}},g]$.
If the intersection of $\textbf{X}$ and $\pi(\textbf{Z}_i)$ is an empty set for all $i\in \{1,2,\cdots,m\}$, then $f(\hat{\textbf{{x}}})\geq0$.
\end{thm}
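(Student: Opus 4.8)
The plan is to establish the contrapositive: I will show that if $f(\hat{\textbf{x}})<0$, then there is an index $j$ with $\textbf{X}\cap\pi(\textbf{Z}_j)\neq\varnothing$. The first point to settle is that the enlarged system $[\textbf{f},g]=\{f_1,\dots,f_n,g\}$ is again zero-dimensional and carries only simple roots, so that Algorithm~1 applies to it and $\textbf{Z}=\{\textbf{Z}_1,\dots,\textbf{Z}_m\}$ really lists \emph{every} real zero of $[\textbf{f},g]$, each contained in exactly one box $\textbf{Z}_i$. This is immediate from the shape of $g$: any zero $(\textbf{x}^{*},y^{*})\in\mathbb{C}^{n+1}$ of $[\textbf{f},g]$ must have $\textbf{x}^{*}$ lying in the finite zero set of $\textbf{f}$, and for each such $\textbf{x}^{*}$ the equation $g(\textbf{x}^{*},y)=y^{2}f(\textbf{x}^{*})+1=0$ has at most two roots $y$, which are distinct --- hence simple --- whenever $f(\textbf{x}^{*})\neq0$; thus the standing hypotheses (finitely many zeros, multiplicity one) are inherited by $[\textbf{f},g]$.

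Next I produce an explicit real zero of $[\textbf{f},g]$ sitting over $\hat{\textbf{x}}$. Since $f(\hat{\textbf{x}})<0$, the number $-1/f(\hat{\textbf{x}})$ is positive, so $y^{*}:=\sqrt{-1/f(\hat{\textbf{x}})}\in\mathbb{R}$ is well defined, and the point $(\hat{\textbf{x}},y^{*})\in\mathbb{R}^{n+1}$ satisfies both $f_i(\hat{\textbf{x}})=0$ for $i=1,\dots,n$ (because $\hat{\textbf{x}}$ is an accurate zero of $\textbf{f}$) and $g(\hat{\textbf{x}},y^{*})=(y^{*})^{2}f(\hat{\textbf{x}})+1=-1+1=0$. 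Hence $(\hat{\textbf{x}},y^{*})$ is a real zero of $[\textbf{f},g]$, and by the completeness noted above it lies in some box $\textbf{Z}_j$. Applying the projection $\pi$, which simply deletes the last coordinate and therefore sends every point of the box $\textbf{Z}_j$ into the box $\pi(\textbf{Z}_j)$, we get $\hat{\textbf{x}}=\pi\big((\hat{\textbf{x}},y^{*})\big)\in\pi(\textbf{Z}_j)$. Since also $\hat{\textbf{x}}\in\textbf{X}$, this gives $\hat{\textbf{x}}\in\textbf{X}\cap\pi(\textbf{Z}_j)$, contradicting the assumption that all of these intersections are empty; therefore $f(\hat{\textbf{x}})\geq0$.

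I expect the one genuinely delicate step to be the verification in the first paragraph: one has to be confident that replacing $\textbf{f}$ by $[\textbf{f},g]$ does not destroy zero-dimensionality or create multiple roots, since otherwise Algorithm~1 would not be guaranteed to return a complete list $\textbf{Z}$ and the projection argument would break down. Everything after that --- writing down the square-root solution and pushing it through $\pi$ --- is elementary set-theoretic bookkeeping. (One may also remark that the symmetric construction with $g=y^{2}f-1$ detects $f(\hat{\textbf{x}})\leq0$ by the same argument, so running the two tests together resolves the strict sign of $f(\hat{\textbf{x}})$ in case~3.)
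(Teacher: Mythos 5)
Your proof is correct and follows essentially the same route as the paper's: argue by contradiction/contrapositive, exhibit $(\hat{\textbf{x}},\sqrt{-1/f(\hat{\textbf{x}})})$ as a real zero of $[\textbf{f},g]$, and conclude that it must lie in some $\textbf{Z}_j$, forcing $\hat{\textbf{x}}\in\textbf{X}\cap\pi(\textbf{Z}_j)$. The only substantive addition on your part is the explicit verification that $[\textbf{f},g]$ remains zero-dimensional with simple roots, a point the paper leaves implicit but which you rightly flag as the load-bearing hypothesis for the completeness of $\{\textbf{Z}_1,\dots,\textbf{Z}_m\}$.
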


\begin{proof}Denote $\emph{\textbf{X}}$ by $ ([a_1,b_1],\cdots,[a_n,b_n])$ and $\textbf{\emph{Z}}_i=([a_{i1},b_{i1}],\cdots,[a_{in},b_{in}],$ \\ $[y_{i1},y_{i2}]),i=1,2,\cdots,m$.

 Suppose $f(\hat{\textbf{\emph{x}}})<0$, then it is easy to see that the point $(\hat{\textbf{\emph{x}}},  \sqrt{-1/f(\hat{\textbf{\emph{x}}})})$ is the accurate zero of [\textbf{\emph{f}}$,g]$,
  so there exists an isolated  real root interval $\textbf{\emph{Z}}_i$ which contains the point $(\hat{\textbf{\emph{x}}}, \sqrt{-1/f(\hat{\textbf{\emph{x}}})})$.
  Hence the intersection of $\emph{\textbf{X}}$ and $\pi(\textbf{\emph{Z}}_i)$ is not an empty set, which contradicts to our suppose, so $f(\hat{\textbf{\emph{x}}})\geq0$. This proves the first part of the theorem.

On the other hand, assume $f(\hat{\textbf{\emph{x}}})\geq0$, then $g=y^2f(\hat{\textbf{\emph{x}}})+1\geq 1 $ for any real value of $y$,
so $\pi(\textbf{\emph{Z}}_i)$ does't contain $\hat{\textbf{\emph{x}}}$ for all $i\in \{1,\cdots,m\}$. This establishes that the intersection of $\textbf{X}$  and $\pi(\textbf{\emph{Z}}_i)(i=1,\cdots,m)$ is an empty set.
This completes the proof.  \end{proof}

 Similar to  theorem 3.1, if we construct $g=y^2f-1$, then we can decide whether $f(\hat{\textbf{\emph{x}}})\leq 0$ or not.

According to the above theorem, the following $Deter_{-}Sign$ can be used to determine the sign of $f(\hat{\textbf{\emph{x}}})$ if $0 \in [a,b]$.

\noindent \textbf{Algorithm 2}: $Deter_{-}Sign$

Input: A polynomial system \textbf{\emph{f}}; An isolated real root interval $\emph{\textbf{X}}$  of \textbf{\emph{f}}; A polynomial $f$, and an index $I$, the value of $I$ is 1 or -1.

Output: If $f(\hat{\textbf{\emph{x}}})I \geq 0$, then return 1; else return -1;
\begin{enumerate}
\item If I=1
   \begin{itemize}
   \item Let $g=y^2f+1$;
   \end{itemize}
\item Else
   \begin{itemize}
   \item Let $g=y^2f-1$;
   \end{itemize}
\item End If
\item $real_{-}roots=real_{-}root_{-}isolate([\textbf{\emph{f}},g])$;
\item $sum=0$;
\item For $i=1:length(real_{-}roots)$
\begin{itemize}

      \item If $\emph{\textbf{X}} \bigcap \pi (real_{-}roots\{i\})=\emptyset$,
          %\begin{itemize}
               then $sum=sum+1$;
          %\end{itemize}
      \item Else,
         % \begin{itemize}
               return -1;

          %\end{itemize}
      \item End If
\end{itemize}

\item End For
\item If $sum=nreal$
%\begin{itemize}
then return 1;
%\end{itemize}
\item End If
\end{enumerate}

Using algorithm 2, it is easy to see that if
$$Deter_{-}sign(\textbf{\emph{f}},f,\emph{\textbf{X}},1)=1 \wedge Deter_{-}sign(\textbf{\emph{f}},f,\emph{\textbf{X}},-1)=1,$$ then $f(\hat{\textbf{\emph{x}}})= 0$.

In the following, a simple example is given to explain how the algorithm  determines the sign of $f(\hat{\textbf{\emph{x}}})$.

\begin{example}\label{example2}
 Let \textbf{{f}}$=[x^2+y-2,x+2y-3]$, and $h=3x+y-4$. It is easy to see that \textbf{{f}} and $h$ has a common point $(1,1)$.
 \end{example}

Using algorithm 1, we obtain an isolated real root interval of \textbf{\emph{f}} which contains the point $(1,1)$.
\vspace{-.1cm}
\[\begin{split}\emph{\textbf{X}}_1=[ & 0.999999999999900,   1.000000000000100],\\ [& 0.999999999999900  , 1.000000000000100].\end{split}\]

Substituting  $\emph{\textbf{X}}_1$ into $h$, we have \[h(\emph{\textbf{X}}_1)= 10^{-11}\times[-0.400124378074906 ,0.399680288865056].\]

Now, let $h_1=a^2(3x+y-4)+1$, then the isolated real root intervals of $[ \textbf{\emph{f}},h_1]$ are
\vspace{-.3cm}
\[\begin{split}\emph{\textbf{Z}}_1=[& -0.500000000000100 , -0.499999999999900],\\
[& 1.749999999999900  , 1.750000000000100],\\
[& -0.516397779494422 , -0.516397779494222].\end{split}\]
\vspace{-.2cm}
\[\begin{split}\emph{\textbf{Z}}_2=
[& -0.500000000000100 , -0.499999999999900],\\
[& 1.749999999999900  , 1.750000000000100],\\
[& 0.516397779494422 , 0.516397779494222].\end{split}\]

So $\emph{\textbf{X}}_1\cap \pi(\emph{\textbf{Z}}_1)=\varnothing $ and $\emph{\textbf{X}}_2 \cap \pi(\emph{\textbf{Z}}_2)=\varnothing$, then we have $h(1,1)\geq 0$. Similarly,
we obtain $h(1,1)\leq 0$ through isolating
the real roots of [ \textbf{\emph{f}}$,a^2(3x+y-4)-1]$. Hence $h(1,1)=0$.

Now, we give the following algorithm for computing the isolated intervals which satisfy the inequations \textbf{\emph{h}} in $(4)$.

\noindent \textbf{Algorithm 3:} $Dele_{-}Inequ$

Input: Polynomial system \textbf{\emph{f}} and  \textbf{\emph{h}};  isolated real root intervals $real_{-}roots$ of \textbf{\emph{f}}.

Output: $real_{-}roots=\{\emph{\textbf{X}} \in real_{-}roots | \emph{\textbf{h}}(\hat{\textbf{\emph{x}}})\neq0, \textbf{\emph{f}} (\hat{\textbf{\emph{x}}})=0, $
$ \hat{\textbf{\emph{x}}} \in \emph{\textbf{X}} \} $.
\begin{enumerate}
\item Set $Index=\{\}$;
\item For {$s=1:length(real_{-}roots)$}
     \begin{itemize}
           \item  Substituting $real_{-}roots\{s\}$ into  \textbf{\emph{h}}, and suppose \\ $ \textbf{\emph{h}}(real_{-}roots\{s\})=([a_1,b_1],\cdots,[a_t,b_t])$;\
           \item Let $set=\{i \in \{1,\cdots,t\} | 0 \in [a_i,b_i]\}$;
           \item If $length(set)\neq t$, then break;
           \item End if
           \item For $t=1:length(set)$
                 \begin{itemize}
                       \item $Sign_1=Deter_{-}sign(\textbf{\emph{f}},real_{-}roots\{s\}, h_t,1)$;
                        \item  If $sign_1=1$,
                      %  \begin{itemize}
                                then \\
                                $Sign_2=Deter_{-}sign(\textbf{\emph{f}},real_{-}roots\{s\}, h_t,-1)$;
                        %\end{itemize}
                         \begin{itemize}
                              \item If $sign_2=1$,  then $Index=Index \bigcup \{s\}$, break;

                        %\begin{itemize}

                       % \end{itemize}
                        \item End If
                        \end{itemize}

                        \item End If

                 \end{itemize}
          \item End For
     \end{itemize}
     \item End For;
     \item $real_{-}roots=real_{-}roots \setminus real_{-}roots(Index).$
\end{enumerate}

In the following, we will present a method to deal with the non-negative systems \textbf{\emph{n}} in \emph{semi-algebraic} system defined in (4).

\noindent \textbf{Algorithm 4 :} $Dele_{-}Nonnega$

Input: Two Polynomial systems: \textbf{\emph{f}} and \textbf{\emph{n}}; isolated real root intervals of \textbf{\emph{f}}: $real_{-}roots$ .

Output: $real_{-}roots=\{\emph{\textbf{X}} \in real_{-}roots | \textbf{\emph{n}}(\hat{\textbf{\emph{x}}})\geq0, \textbf{\emph{f}} (\hat{\textbf{\emph{x}}})=0, $ $ \hat{\textbf{\emph{x}}} \in \emph{\textbf{X}} \} $.

\begin{enumerate}
\item Set $Index=\{\}$;
\item For $t=1:length(real_{-}roots)$
     \begin{itemize}
     \item Substituting $real_{-}roots\{t\}$ into \textbf{\emph{n}}, and suppose \\ \textbf{\emph{n}}$(real_{-}roots\{t\})=([a_1,b_1],\cdots,[a_s,b_s])$;
     \item If there exists some $i \in \{1,\cdots,s\}$ such that $b_i<0$, then let $Index=Index \cup \{t\}$;
     %\begin{itemize}
          % \item $Index=Index \cup \{t\}$;

    % \end{itemize}
     \item Else if there exists some $i \in \{1,\cdots,s\}$ such that $0 \in [a_i,b_i]$.
       Let $set=\{i \in \{1,\cdots,s\} | 0 \in [a_i,b_i] \}$;
     \begin{itemize}
          % \item
           \item For $j=1:length(set)$
                 \begin{itemize}
                       \item $Sign_1=Deter_{-}sign(\textbf{\emph{f}},real_{-}roots\{t\},n_j,1)$;
                       \item If $sign_1=-1$, then  $Index=Index \cup \{s\}$, break;

                       \item End If

                 \end{itemize}
           \item End for
     \end{itemize}
     \item End if
     \end{itemize}
     \item End for
     \item $real_{-}roots=real_{-}roots \setminus real_{-}roots(Index).$
\end{enumerate}

The following algorithm is to remove the intervals which satisfy \textbf{\emph{p}} in $(4)$.

\noindent \textbf{Algorithm 5: } $Dele_{-}Posi$

Input:  Two polynomial systems: \textbf{\emph{f}} and  \textbf{\emph{p}}; isolated intervals $real_{-}roots$ of  \textbf{\emph{f}};

Output: $real_{-}roots=\{\emph{\textbf{X}} \in real_{-}roots | \textbf{\emph{p}}(\hat{\textbf{\emph{x}}})>0, \textbf{\emph{f}} (\hat{\textbf{\emph{x}}})=0, $where$ \hat{\textbf{\emph{x}}} \in \emph{\textbf{X}} \} $.
\begin{enumerate}
\item $real_{-}roots=Dele_{-}Inequ(\emph{\textbf{f}},real_{-}roots,\emph{\textbf{p}})$;
\item $real_{-}roots=Dele_{-}Nonnega(\emph{\textbf{f}},real_{-}roots,\emph{\textbf{p}})$;
\end{enumerate}

Until now, we have described all the parts of the algorithm for isolating the real roots of \emph{semi-algebraic} system. Here, we present the whole algorithm in the following.

\noindent \textbf{Algorithm 6: } $real_{-}root_{-}semi$

Input: A \emph{semi-algebraic} system in the form of $(4)$.

Output: isolated real root intervals of $(4)$ or return $\{\}$;

\begin{enumerate}
\item $real_{-}roots=real_{-}root_{-}isolate(\textbf{\emph{f}})$;
\item If {$length(real_{-}roots)=0$}, then return  $\{\}$;
     %\begin{itemize}
     %\item
    % \end{itemize}
     \item  Else
     \begin{itemize}
     \item $real_{-}roots=Dele_{-}Inequ(\textbf{\emph{f}},real_{-}roots,\textbf{\emph{h}})$;
     \item If $length(real_{-}roots)=0$, then return  $\{\}$;
          % \begin{itemize}
              %  \item
         %  \end{itemize}
           \item Else
           \begin{itemize}
                 \item $real_{-}roots=Dele_{-}Nonnega(\textbf{\emph{f}},real_{-}roots,\textbf{\emph{n}})$;
                 \item If {$length(real_{-}roots)=0$}, then return  $\{\}$;
                 %\begin{itemize}
                   %    \item
                % \end{itemize}
                 \item Else
                 \begin{itemize}
                 \item $real_{-}roots=Dele_{-}Posi(\textbf{\emph{f}},real_{-}roots,\textbf{\emph{p}})$;\
                 \item If $length(real_{-}roots)=0$, then return $\{\}$;
                      % \begin{itemize}
                     %         \item then return  $\{\}$;
                      % \end{itemize}
                 \item End If
                 \end{itemize}
                 \item End If
           \end{itemize}
           \item End If

     \end{itemize}

\item End If
\item Return $real_{-}roots$.
\end{enumerate}

In the following, a simple example is provided to illustrate algorithm 6 for isolating the real roots of a \emph{semi-algebraic} system.
\begin{example}\label{example1}
 Given a {semi-algebraic} system,
\begin{eqnarray}
SAS=\left\{ {\begin{array}{*{20}{c}}
 f_1&=5+13x_1-10x_2-82x_1^2+71x_1x_2+16x_2^2. \hfill  \nonumber \\
 f_2&=403.22x_1-314.64+73.16x_1^2-269.26x_1x_2+300.96x_2-\hfill\\& 48x_1^3+53x_1^2x_2-28x_2^2x_1+95.76x_2^2. \hfill \nonumber \\
  & x_1-0.3>0,x_2>0,x_1-3.42\neq 0.\hfill
  \end{array}} \right.
\end{eqnarray}
\end{example}
Step 1: Denote the isolated real root intervals of $[f_1,f_2]$ by $\textbf{\texttt{X}}$, using algorithm 1, we obtain six intervals:

%$\emph{\textbf{X}}_1=$
%$[ 3.419999999999901 ,  3.420000000000101],$
%$ [ 3.202328957744943 ,  3.202328957745143];$
\vspace{-.6cm}
 \[ \begin{split}\emph{\textbf{X}}_1=[& 3.419999999999901 ,  3.420000000000101] , \\ [& 3.202328957744943 ,  3.202328957745143];\end{split}\]
\vspace{-.3cm}
\[\begin{split}\emph{\textbf{X}}_2=
[ &3.419999999999901 ,  3.420000000000101],\\
 [& -17.753578957745148 ,-17.753578957744949];\end{split}\]
\vspace{-.2cm}
  \[\begin{split}\emph{\textbf{X}}_3= [ &-0.099812907756173,  -0.099812907755973],\\ [& 0.857641152899082   ,0.857641152899282];\end{split}\]
\vspace{-.2cm}
\[\begin{split}\emph{\textbf{X}}_4=[ &-8.128471753948865 , -8.128471753948666],\\ [  &-7.758843802156950 , -7.758843802156750];\end{split}\]
\vspace{-.2cm}
  \[\begin{split}\emph{\textbf{X}}_5=
[&0.584413218566582  , 0.584413218566782],\\
 [&   -2.373999555338525 , -2.373999555338325];\end{split}\]
\vspace{-.2cm}
\[\begin{split}\emph{\textbf{X}}_6=[ & 0.560588744512268,   0.560588744512469],\\
[ & 0.376338245290841  , 0.376338245291041].\end{split}\]

\label{}

Step 2: Substituting $\textbf{\texttt{X}}$ into $x_1-3.42$, we have

$ev_1=[-0.000000000000099  , 0.000000000000100]$,

$ev_2=[ -0.000000000000099,   0.000000000000101]$,

$ev_3=[ -3.519812907756173 , -3.519812907755973]$,

$ev_4=[ -11.548471753948863 ,-11.548471753948665]$,

$ev_5=[ -2.835586781433418  ,-2.835586781433218]$,

$ev_6=[ -2.859411255487732 , -2.859411255487531]$.

It is easy to see that $x_1-3.42\neq0$ at any point contained in intervals $\emph{\textbf{X}}_3,\emph{\textbf{X}}_4,\emph{\textbf{X}}_5,\emph{\textbf{X}}_6$,
so these intervals should be retained.
Using algorithm 3 to process intervals $\emph{\textbf{X}}_1$ and $\emph{\textbf{X}}_2$, we know that $x_1-3.42$ is equal to zero at the solutions of $\{f_1,f_2\}$, so we discard $\emph{\textbf{X}}_1$ and $\emph{\textbf{X}}_2$ at this step.

Step 3:
Substituting $\emph{\textbf{X}}_3$ $\emph{\textbf{X}}_4$ $\emph{\textbf{X}}_5$ and $\emph{\textbf{X}}_6 $ into $\textbf{\emph{P}}=\{x_1-0.3,x_2\}$, we obtain the following four intervals
\[\begin{split}\textbf{\emph{P}}(\emph{\textbf{X}}_3)=
[&-0.399812907756173 , -0.399812907755973],\\
[ & 0.857641152899082  , 0.857641152899282];\end{split}\]
\vspace{-0.2cm}
\[\begin{split}\textbf{\emph{P}}(\emph{\textbf{X}}_4)=
[& -8.428471753948864,  -8.428471753948665],\\
[ & -7.758843802156949  ,-7.758843802156748];\end{split}\]
\[\begin{split}\textbf{\emph{P}}(\emph{\textbf{X}}_5)=
[ &0.284413218566582  , 0.284413218566782],\\
[ & -2.373999555338525 , -2.373999555338325];\end{split}\]
\vspace{-0.1cm}
\[\begin{split}\textbf{\emph{P}}(\emph{\textbf{X}}_6)=
[&0.260588744512268 ,  0.260588744512469],\\
 [&0.376338245290841   ,0.376338245291041].\end{split}\]
where $\textbf{\emph{P}}(\emph{\textbf{X}}_i)$ means \emph{ interval extension} $\textbf{\emph{P}}$ at $\emph{\textbf{X}}_i,i=3,4,5,6.$
Obviously, $\emph{\textbf{X}}_3$, $\emph{\textbf{X}}_4$ and $\emph{\textbf{X}}_5$ should be deleted from $\textbf{\texttt{X}}$.

Therefore, the isolated real root interval of $SAS$ is
\vspace{-0.1cm}
\[\begin{split}\emph{\textbf{X}}_6=
[& 0.560588744512268  , 0.560588744512469],\\
[ & 0.376338245290841   ,0.376338245291041].\end{split}\]
\subsection{Comparison experiments}

 Algorithm 6 has been implemented in  Matlab 2010a  named $real_{-}roots_{-}semi$. In this subsection,  we do some experiments (see Appendix A) to compare it with Maple package
Discover described in \cite{XL2002} on the platform of Maple 15 classic work sheet. All experiments are carried out on a Dell PC with Intel Core i3-2120 at 3.30GHZ and 4GB of RAM in Windows 7. %apply our new method to some\emph{ semi-algebraic} systems and do some comparison with other algorithm.

%By the new program, a preliminary Matlab 2010a implementation with code name $real_{-}roots_{-}semi$ is tested
%in comparison with Maple package $Discover$  described in [8] on the platform of Maple 15 classic work sheet.

\begin{table}[!hbp]
\begin{tabular}{|c|c|c|c|c|c|}
\hline
Example No.& Max degree &$N_{-}v$ &$N_{-}p$ & Discover & $real_{-}roots_{-}semi$  \\
\hline
A.1 &2&3&1& 0.062&  0.259715 \\
\hline
A.2 &3&3& 1&0.483 & 0.188181  \\
\hline
A.3 & 4&3&0&219.759 & 0.461846  \\
\hline
A.4 &2&4&1& 13.947 &  0.302547\\
\hline
A.5 &3&4&1& 0.31 &  0.756411\\
\hline
A.6 &4&4&3& 9.734 & 6.106957 \\
\hline
A.7&2& 4&2&13.821 &1.942310\\
\hline
A.8 &4&5&5& $\infty$ &0.794765 \\
\hline
A.9 &2&6&5& $\infty$ & 11.272852 \\
\hline
\end{tabular}
\caption{Time comparison, units:s.}\label{Tab:1}
\end{table}
Table 1 gives the comparison of execution time. In the first row, $M_{-}d$, $N_{-}v$ and $N_{-}p$ denote the max degree, number of variables, number of real points of the
\emph{semi-algebraic} system. And $\infty$ means the program out of memory. Among these examples, A.5 is from \cite{XZ2006}, the other examples are chosen randomly.
As shown in Table 1, $Discover$ is faster than  $real_{-}root_{-}semi$ for  Example A.1 and A.5, since these \emph{semi-algebraic} system are simple. When the system becomes more complicated,
$real_{-}root_{-}semi$ is more efficient than $Discover$. From table 1 we can also find that the costs of example A.6, A.7 and A.9 are much higher than those of other examples compared
 because these examples can satisfy the inequations. %The time for removal of these real roots leads to the lower efficiency.
%How to simplify this step will be in our future work.
 \section{Applications}

Most engineering problems can be reduced to solving nonlinear equations. In this section, some applications from robot kinematic, piecewise algebraic curve, chemical engineering,
 circuit design are tested to show the efficiency of our algorithm.

\subsection{Polynomial system}
In this subsection, we first investigate applications of our algorithm to some polynomial systems.
\begin{example}
 Production of synthesis gas in an adiabatic reactor\cite{KS1988}.
\begin{eqnarray}
\left\{ {\begin{array}{*{20}{c}}
x_1x_7+2x_2x_7+x_3x_7-2x_6=0. \hfill   \\
x_7(x_3+x_4+2x+5)-2=0.\hfill \\
7x_1+7x_2+7x_5-1=0.\hfill\\
x_1+x_2+x_3+x_4+x_5-1=0.\hfill\\
400x_1x_4^3-178370x_3x_5=0.\hfill\\
x_1x_3-2.6058x_2x_4=0.\hfill\\
-28837x_1x_7-139009x_2x_7-78213x_3x_7+18927x_4x_7+8427x_5x_7+\hfill \\ 13492 -10690x_6=0.\hfill\\
 0 \leq x_i\leq 1, i=1,2,3,4,5.\hfill \\
 0 \leq x_i\leq 5 , i=6,7.\hfill
 \end{array}} \right.
\end{eqnarray}
\end{example}
The above system of equations represents three atom balances, a mole fraction
constraint, two equilibrium relations, and an energy balance equations. We obtain 8 real roots of this equation through 0.536665 seconds and 8 interval iterations  for the 8 solution, then remove 7 real roots after 0.061837 seconds. Let the midpoint of interval be the approximate root of the problem, then

$\textbf{\emph{x}}=[0.322870839476541,0.009223543539188,0.046017090960632,$\\$0.618171675070824, 0.003716850952815,0.576715395935549,2.977863450791145]$.

\begin{example}
Robot kinematic problem \cite{MF1995}.
\begin{eqnarray}
\left\{ {\begin{array}{*{20}{c}}
(0.004731x_3-0.1238)x_1-(0.3578x_3+0.001637)x_2+x_7-0.9338x_4\hfill \\-0.3571=0. \hfill   \\
0.2238x_1x_3+0.7623x_2x_3+0.2638x_1-x_7-0.07745x_2-0.6734x_4\hfill\\-0.6022=0.\hfill \\
x_6x_8+0.3578+0.004731x_2=0.\hfill\\
-0.7623x_1+0.2238x_2+0.3461=0.\hfill\\
x_1^2+x_2^2-1=0.\hfill\\
x_3^2+x_4^2-1=0.\hfill\\
x_5^2+x_6^2-1=0.\hfill\\
x_7^2+x_8^2-1.\hfill \\
x_i\geq -1, x_i \leq 1 ,i=1,2,\cdots,8.\hfill
 \end{array}} \right.
\end{eqnarray}
\end{example}
All 16 real solutions were founded in 0.826805 seconds of CPU time, which is more efficient than  3.783 seconds CPU time in \cite{MVP2010} to solve the problem.
\begin{example}
This example addresses the equilibrium of the products of a hydrocarbon
combustion process \cite{MM1990}. The problem is reformulated in the `element variables' space.
\begin{eqnarray}
\left\{ {\begin{array}{*{20}{c}}
y_1y_2+y_1-3y_5=0. \hfill   \\
2y_1y_2+y_1+3R_{10}y_2^2+y_2y_3^2+R_{7}y_2y_3+R_{9}y_2y_4+R_{8}y_2-Ry_5=0.\hfill \\
2y_2y_3^2+R_7y_2y_3+2R_{5}y_3^2+R_{6}y_3-8y_5=0.\hfill\\
R_{9}y_2y_4+2y_4^2-4Ry_5=0.\hfill\\
y_1y_2+y_1+R_{10}y_2^2+y_2y_3^2+R_{7}y_2y_3+R_{9}y_2y_4+R_{8}y_2+R_{5}y_3^2\hfill \\+R_{6}y_3+y_4^2-1=0.\hfill\\
y_i-0.0001\geq 0, 100-y_i\geq 0 ,i=1,2,\cdots,5.\hfill
 \end{array}} \right.
\end{eqnarray}
\end{example}
The value of parameter $R_i,i=1,2,\cdots,10$ can be found in \cite{MM1990}. The method in \cite{MVP2010} finds the single solution after 217.7 seconds of CPU time.
Using our method, we only use 0.2964 seconds CPU time.

\begin{example}
This example for computing the real roots of piecewise algebraic curve is modified from \cite{W2012}.
 \end{example}
 \vspace{-0.2cm}
\begin{figure}
%\centering\includegraphics[width=110mm]{images/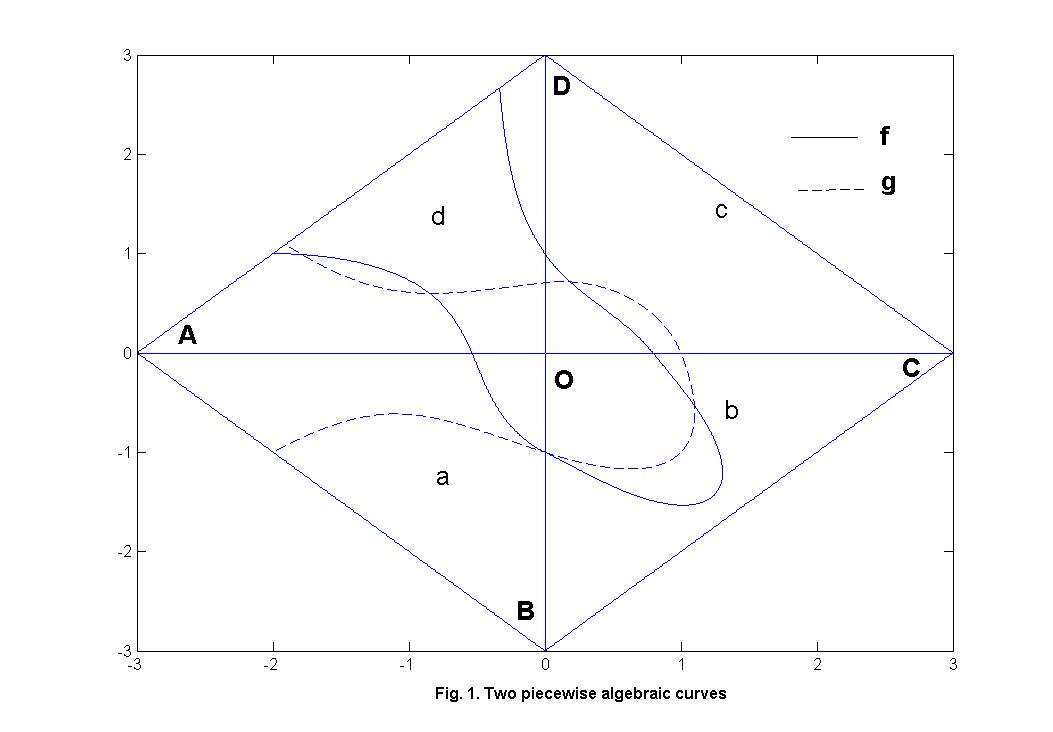}\\
\centering\includegraphics[width=110mm]{1.jpg}\\
\end{figure}

Let $\Delta=\{a,b,c,d\}$ be a regular triangulation of rectangular domain $ABCD$ in $\mathbb{R}^{2}$, where $a=[AOB]$, $b=[BOC]$, $c=[COD]$ and $d=[AOD]$,
where $A=(-3,0)$, $B=(0,-3)$, $C=(3,0)$ $D=(0,3)$ and $O=(0,0)$, $f$ and $g$ are algebraic curves (See Fig. 1).

Suppose that $f,g\in S_3^{1}(\Delta)$ and
\begin{equation}
\text{on cell $a$}:
\left\{ {\begin{array}{*{20}{c}}
f_1|_{a}=x^3-y^3+3y^2+xy-3x-4. \hfill  \nonumber \\
g_1|_{a}=x^3-y^3+2yx+4x^2-1. \hfill \\
 \end{array}} \right.
\end{equation}

\begin{equation}
\text{on cell $b$}:
\left\{ {\begin{array}{*{20}{c}}
f_1|_{b}=f_1|_{a}+x^2(2x+3y+4). \hfill  \nonumber \\
g_1|_{b}=g_1|_{a}+x^2(x+y-4).\hfill \\
 \end{array}} \right.
\end{equation}

\begin{equation}
\text{on cell $c$}:
\left\{ {\begin{array}{*{20}{c}}
f_1|_{c}=f_1|_{b}+y^2(x+y+5). \hfill  \nonumber \\
 g_1|_{c}=g_1|_{b}+y^2(2x+y+1).\hfill \\
 \end{array}} \right.
\end{equation}

\begin{equation}
\text{On cell $d$}:
\left\{ {\begin{array}{*{20}{c}}
 f_1|_{d}=f_1|_{a}+y^2(x+y+5). \hfill  \nonumber \\
 g_1|_{d}=g_1|_{a}+y^2(2x+y+1).\hfill \\
 \end{array}} \right.
\end{equation}

We take the cell $a$ for instance, isolating the  intersection points of $\{f_1|_{a},g_1|_{a}\}$ lying in $a$ can be converted to isolating the real roots of the following
\emph{semi-algebraic} system:
\begin{eqnarray}
\left\{ {\begin{array}{*{20}{c}}
x^3-y^3+3y^2+xy-3x-4=0. \hfill   \\
x^3-y^3+2yx+4x^2-1=0.\hfill \\
-x\geq0,-y\geq0,x+y+3\geq0\hfill\\
\end{array}} \right.
\end{eqnarray}
Using the algorithm $real_{-}root_{-}semi$, we know that there is only one real root lying in $a$. Similarly, we conclude that $\{f_1|_{c},g_1|_{c}\}$ has one real point in the interior
 of  $c$, and respectively, $\{f_1|_{b},g_1|_{b}\}$ and $\{f_1|_{d},g_1|_{d}\}$ has two common points in the cell of $b$ and  $d$.

Let the polynomial $\{f_1|_{b},g_1|_{b}\}$ be inequations, and append it to system (9), we conclude that $a$ and $b$ have a common real point. Hence,
$f$ and $g$ have 5 real points. Methods given by Wang, et al., based on interval method, Groebner basis in \cite{WZ2008,ZW2009,W2012,LW2009,WW2008} can also find the commom points of
piecewise algebraic curve, but they did not consider these points which lie in the boundary.

\subsection{Transcendental functions system}
In practice, systems of transcendental functions  appear in many applications \cite{KS1988,MF1995}, e.g., logarithm functions, exponential functions and trigonometric functions.

As far as we know, there exists three kinds of numerical methods to solve nonlinear equations: homotopy continuation method, Newton's iteration and interval bisection method. Each kind of algorithm has
its advantages and disadvantages. Homotopy continuation method is a global convergence algorithm, but it can only be used for solving polynomial equations. Given an appropriate initial point,
the convergence of Newton's method is typical local quadratic, but how to choose the initial point is a difficult problem. The interval method
is unacceptable in computation when some conditions are complicated, such as more variables,  the huge width of the initial interval and the more accuracy of the final result.

The basic idea of our method is to replace the exponential function by its Taylor expansion first. In this step, rather than using the initial interval, we reduce the width
of the initial interval using interval algorithm, until the errors between the  exponential function and its Taylor expansion is smaller than a tolerance.
  %and let the Taylor expansion of the the exponential function at
  %the reduced interval to replace the exponential function first.
   We then apply homotopy method to the polynomial system to obtain the approximate real roots.
  At last, the real roots of transcendental functions are found based on these approximate real roots using the Newton's method.

  In the following,   we apply this idea to solve a  circuit design problem from \cite{RR1993}.

\begin{example}
 Circuit design problem with extraordinary sensitivities to small perturbations.

\begin{eqnarray}
\left\{ {\begin{array}{*{20}{c}}
(1-x_1x_2)x_3(e^{(x_5(g_{1k}-g_{3k}x_710^{-3}-g_{5k}x_810^{-3}))}-1)-g_{5k}+g_{4k}x_2=0. \hfill   \\
(1-x_1x_2)x_4(e^{(x_6(g_{1k}-g_{2k}-g_{3k}x_710^{-3}+g_{4k}x_910^{-3}))}-1)-g_{5k}x_1+g_{4k}=0.\hfill \\
x_1x_3-x_2x_4=0.\hfill\\
k=1,2,3,4.\hfill\\
x_i \in [0,10], i=1,\cdots,9.\hfill
 \end{array}} \right.
\end{eqnarray}
\end{example}
The constant $g_{ik}$ are given by the following matrix
\[\left(
  \begin{array}{cccc}
    0.4850 & 0.7520 & 0.8690 & 0.9820 \\
    0.3690 & 1.2540 & 0.7030 & 1.4550 \\
    5.2095 & 10.0677 & 22.9274 & 20.2153 \\
    23.3037 & 101.7790 & 111.4610 & 191.2670 \\
    28.5132 & 111.8467 & 134.3884 & 211.4823 \\
  \end{array}
\right),\]

Step 1: Since the exponential function in system (10) only contains variables $x_i, i=5,6,7,8,9$,  we only bisect the intervals of these variables by interval Newton iteration.
This is different from other algorithms. After 10 times interval iterations for each variable, we have the last interval of these variables
\begin{center}
$[ 7.998046875 ,  8.007812500]$,

  $[ 7.998046875 ,  8.007812500],$

   $[5.000000000 ,  5.009765625],$

   $[0.996093750 ,  1.005859375],$

   $[1.992187500  , 2.001953125].$
   \end{center}
   So we only need 50 bisections, and this progress takes about 9 seconds.
Now, we compute the \emph{ interval extension} of $g_{1k}=x_5(g_{1k}-g_{3k}x_710^{-3}-g_{5k}x_810^{-3})$ and $g_{2k}=x_6(g_{1k}-g_{2k}-g_{3k}x_710^{-3}+g_{4k}x_910^{-3})$,
and using the Taylor expansion of the exponential function in the midpoint of the \emph{ interval extension} of $g_{1k}$ and $g_{2k}$ to replace
$e^{g_{1k}}$ and $e^{g_{2k}}$ for $k=1,2,3,4$ in equation (10).  At this point, we use the second order Taylor expansion for $e^{g_{14}}$ and the first order for other exponential
functions. Note that the errors generated by this step are not more than $10^{-2}$.

Step 2: For the polynomial system obtained from step 1, there exists 37 homotopy paths, but only $4$ roots are founded using homotopy algorithm, and  only 2 of them are real roots. This process takes 1.469615 seconds.

Step 3: Let the two real points be the initial point of the Newton's iteration for system $(10)$. After $0.092$ seconds we find that this two points converge to the same point.

\vspace{-.6cm}
\[
\footnotesize\begin{split}\hat{\emph{\textbf{x}}}=[&.899999952618, .449987471886, 1.00000648241, 7.99997144063, 5.00003127610, \\&.999987723423, 2.00006854190, 7.99969268290, 2.00005248366].\end{split}\]

\vspace{-.2cm}
Compared with $\hat{\emph{\textbf{x}}}$, we find that the errors between $\hat{\emph{\textbf{x}}}$ and the initial point from step 2 is 1.2602 and 0.0017, respectively.
The last one is close to $\hat{\emph{\textbf{x}}}$ which ensure the convergence of the Newton iterative algorithm in this step.
   %Based on the above operation, we guarantee that the error between $e^{x}$ and its Taylor expansion is smaller than $10^{-2}$. For this polynomial system, there exists 37 roots path, only $4$ roots are founded using homotopy algorithm, and  only 2 of them are real roots. This process takes 1.469615 seconds.

 From the above description, we totally  use  about 11 seconds to find  the real point of equation (10), which is much less than 436.5 seconds in \cite{MVP2010}.

 %As we have seen in  example 4.1, 4.2, 4.3 and 4.5, each restraint condition is an interval, the number of real point is very small, and the width of the initial interval is small too,
 %which make the the algorithm in \cite{MF1995,MVP2010} easy to implement. If these conditions are not true, this algorithm is not useful since we need a large number of bisections .
 % Unlike these methods, we get the initial interval of every real point based on the approximate root obtained by homotopy algorithm, since this root is very closely to the exact root, so we generally get an interval after only one Krawcayk operator.

\section{Conclusion}
For a class of \emph{semi-algebraic} system, this paper presents a numerical method for isolating the real roots. The algorithm first obtains the isolated intervals of a zero-dimensional
 polynomial system using hybrid technique \cite{S2012}, and substitute it into the inequations, nonnegative system, and positive systems to remove some intervals.
 Such implementation will be efficient if the interval extension of constrained equation does not contain zero at the isolated real root interval of the system.
 Otherwise, we give a complete numerical algorithm to determine if a polynomial equation is equal to zero. We implement our algorithm in Matlab environment. Many random
 examples have been checked along with comparison to famous software Discover . At last, this new method is used to solve problems originated from several applications.
For transcendental functions, we give an idea for solving this problem which is a combination of  Newton's method, homotopy algorithm and interval arithmetic.
However, the number of bisections, the order of Taylor expansion and the error control are deserved to further study.

\section{Acknowledge}
The authors especially thank associate professor Jinming Wu
for the help of discussion about the piecewise algebraic curve.

%% The Appendices part is started with the command \appendix;
%% appendix sections are then done as normal sections
%% \appendix

%% \section{}
%% \label{}

%% References
%%
%% Following citation commands can be used in the body text:
%% Usage of \cite is as follows:
%%   \cite{key}          ==>>  [#]
%%   \cite[chap. 2]{key} ==>>  [#, chap. 2]
%%   \citet{key}         ==>>  Author [#]

%% References with bibTeX database:

%\bibliographystyle{model1b-num-names}
\bibliography{<your-bib-database>}

%% Authors are advised to submit their bibtex database files. They are
%% requested to list a bibtex style file in the manuscript if they do
%% not want to use model1b-num-names.bst.

%% References without bibTeX database:

\appendix
\renewcommand\thesection{\appendixname~\Alph{section}}
\renewcommand\theequation{\Alph{section}.\arabic{equation}}
\section{}
\begin{equation}
\left\{ {\begin{array}{*{20}{c}}
 -3+95x_3+29x_1x_2+63x_1x_3+37x_2^2-36x_3^2=0. \hfill \\
-2x_1+46x_2+4x_1x_2+50x_1x_3+61x_3x_2-99x_3^2=0. \hfill\\
 72-49x_1+13x_2-15x_1^2-12x_2^2+23x_3^2=0.\hfill\\
 3x_2+2x_1-1\geq 0, 3x_1-4x_2-3>0.\hfill\\
 \end{array}} \right.
\end{equation}

\begin{equation}
\left\{ {\begin{array}{*{20}{c}}
 -85x_1-76x_2+75x_3-41x_2^2-84x_2^3=0. \hfill  \\
-39+26x_1x_3+47x_1^3-47x_1^2x_2+91x_2^2x_1+43x_3x_1x_2=0 .\hfill  \\
16x_1+25x_3+13x_1x_2+x_2^2-8x_3x_1^2-74x_2x_3^2=0\hfill \\
19x_3+10x_1x_3+53x_3x_2-97x_3x_1^2+57x_2^2x_1+68x_2x_3^2 \geq0 \hfill \\
47x_1x_2+58x_3x_2+30x_3^2+55x_3x_1^2+56x_3x_1x_2+55x_3^3 \geq0\hfill\\
 48x_2+34x_1^2+32x_1^3-41x_2^2x_1-56x_2^3+63x_2x_3^2 \geq 0\hfill\\
27+11x_1+46x_3-42x_1x_3-60x_1^3-48x_2^2x_3 \geq 0 \hfill
 \end{array}} \right.
\end{equation}

\begin{equation}
\left\{ {\begin{array}{*{20}{c}}
 -49x_2+31x_3x_2+73x_1x_3^2+95x_2^2x_1^2+68x_1x_2x_3^2-29x_1x_3^3=0. \hfill  \\
37+5x_1^2-36x_3^2-57x_2x_3^2+85x_1x_3^3+80x_2^2x_3^2=0 .\hfill  \\
30x_2^2-3x_3x_2-56x_1^2x_2-91x_2^2x_1^2-70x_3x_2^2x_1+42x_2^4=0\hfill \\
62+96x_2-51x_3+89x_1^2+14x_2^2x_1-79x_2^2x_3 >0 \hfill \\
86+57x_1-35x_1^2+57x_2^2+28x_3^2+63x_3x_1^2 >0\hfill\\
 -61x_1+45x_3-50x_2^2-22x_1^3+48x_1^2x_2-82x_2x_3^2 > 0\hfill\\
-85x_3x_2-44x_3x_1^2-31x_2^2x_1+45x_1x_3x_2+49x_2^3-58x_3^3 \geq 0 \hfill
 \end{array}} \right.
\end{equation}

\begin{equation}
\left\{ {\begin{array}{*{20}{c}}
 94-99x_1x_3+46x_2x_4+41x_3^2+95x_3x_4=0. \hfill  \\
 -59x_1-15x_4+30x_1x_2-57x_1x_3+50x_2^2+62x_2x_3=0 .\hfill  \\
53-9x_1-99x_4+12x_1x_4+20x_2x_4+99x_4^2=0\hfill \\
-82-22x_3-60x_2^2+3x_2x_4-83x_3x_4-84x_4^2=0 \hfill \\
-74x_2x_3+87x_3^2+68x_4x_1^2-88x_3x_2^2-88x_2x_3^2-87x_2x_3x_4 \geq 0 \hfill
 \end{array}} \right.
\end{equation}

\begin{equation}
\left\{ {\begin{array}{*{20}{c}}
 2-6.5x_1+x_1^2x_2-0.5x_3=0. \hfill   \\
6x_1-x_1^2x_2-5x_4+5x_2=0 .\hfill  \\
2-6.5x_3+x_3^2x_4-0.5x_1=0\hfill \\
6x_3-x_3^2x_4+1+0.5x_2-0.5x_4=0 \hfill \\
x_1>0,x_2>0,x_3>0,x_4>0\hfill
\end{array}} \right.
\end{equation}

\begin{equation}
\left\{ {\begin{array}{*{20}{c}}
 9x_1^2-20+x_1^2x_4-5x_4-x_1^4-4x_1^3x_2+20x_1x_2-4x_1^2x_2x_4+20x_2x_4\hfill \\+3x_1^2x_3^2-15x_3^2=0. \hfill   \\
3x_1^2x_2^2-39x_1^2+4x_1x_2^3-52x_1x_2-5x_3x_1x_2^2+65x_1x_3+3x_1x_2^2x_4 \hfill \\-39x_1x_4-3x_2^4+39x_2^2=0. \hfill\\
 -7-7x_2+4x_1x_3-2x_1x_4+5x_2x_3+5x_2x_4=0.\hfill\\
3+2x_1+3x_3-3x_1^2+5x_1x_2-x_3x_4=0.\hfill\\
 x_1^2-5\neq 0, x_2^2-13\neq 0.\hfill\\
 \end{array}} \right.
\end{equation}

\begin{equation}
\left\{ {\begin{array}{*{20}{c}}
2x_1^3-3x_1^2x_2-8x_1^2-5x_1x_2^2-12x_1x_2+5x_1^2x_3-12x_1x_3x_2-20x_1x_3 \hfill \\+4x_1^2x_4-12x_1x_2x_4-16x_1x_4-12x_2^3-16x_2^2-9x_2^2x_3-12x_2x_3=0. \hfill  \\
50x_3-65-20x_1x_3+26x_1-30x_1x_3x_2+39x_1x_2-70x_2x_3^2+91x_2x_3\hfill \\-70x_3^3+ 91x_3^2-40x_3^2x_4+52x_3x_4=0. \hfill\\
 5x_1-2x_3-2x_1x_4-2x_2x_4-x_3^2-7x_4^2=0.\hfill\\
-7-7x_1+2x_1x_2-7x_2x_4-3x_3x_4+5x_4^2=0.\hfill\\
 x_1-3x_2-4\neq 0,x_3-1.3\neq 0.\hfill\\
 \end{array}} \right.
\end{equation}

\begin{equation}
\left\{ {\begin{array}{*{20}{c}}
66x_4x_2^3x_1-44x_1^2x_2x_4+66x_1x_2x_4-165x_4x_5^2x_2^2+110x_4x_5^2x_1-165x_4x_5^2=0. \hfill   \\
-10x_1+62x_3-82x_4+80x_5-44x_2^2+71x_3x_4=0. \hfill\\
74x_1x_2+72x_1x_4+37x_2x_4-23x_2x_5+87x_3x_5+44x_4^2=0.\hfill\\
11-49x_2-47x_5+40x_1^2-81x_2x_3+91x_2x_4=0.\hfill\\
-28x_1+16x_2+30x_4-27x_1x_2-15x_1x_4-59x_3x_4=0.\hfill \\
3x_2^2-2x_1+3 \neq0,x_4\neq0. \hfill
 \end{array}} \right.
\end{equation}

\begin{equation}
\left\{ {\begin{array}{*{20}{c}}
36x_2+91x_3-22x_2x_3+51x_3x_6-27x_4x_5+50x_5^2=0. \hfill   \\
25x_1+31x_2-27x_4+65x_1x_2+88x_2x_5+10x_3x_5=0. \hfill\\
95x_2+68x_1x_2-29x_1x_6+5x_2x_4-26x_3^2-51x_3x_4=0.\hfill\\
5-36x_4-57x_1x_2+85x_3^2+80x_4x_6+90x_6^2=0.\hfill\\
  65x_1x_2-12x_1x_5+78x_1x_6+5x_2^2-63x_2x_6-5x_3x_5=0.\hfill \\
-70+42x_1+9x_1^2-21x_1x_2-27x_1x_5-79x_6^2=0. \hfill \\
x_4^2+x_5x_6-2 \geq 0.\hfill\\
3x_1-4x_2+4>0. \hfill
 \end{array}} \right.
\end{equation}

\end{document}